\newcommand{\bn}{{\mathbb{N}}}
\newcommand{\br}{{\mathbb{R}}}
\newcommand{\bp}{{\mathbb{P}}}
\newcommand{\bc}{{\mathbb{C}}}
\newcommand{\bq}{{\mathbb{Q}}}
\newcommand{\ch}{{\mathcal{H}}}
\newcommand{\ce}{{\mathcal{E}}}
\newcommand{\cm}{{\mathcal{M}}}
\newcommand{\cv}{{\mathcal{V}}}
\renewcommand{\b}{\beta}
\renewcommand{\l}{\lambda}
\newcommand{\s}{\sigma}
\newcommand{\dd}{\Delta}
\renewcommand{\o}{\omega}
\renewcommand{\gg}{\Gamma}
\newcommand{\ag}{A(\gg)\,}
\newcounter{images}
\newcommand{\image}{\refstepcounter{images}%
Fig.\ \arabic{images} }
\numberwithin{equation}{section}
\newtheorem{theorem}{Theorem}[section]
\newtheorem{corollary}[theorem]{Corollary}
\newtheorem{proposition}[theorem]{Proposition}
\theoremstyle{definition}
\newtheorem{remark}[theorem]{Remark}
\newtheorem{example}[theorem]{Example}
\begin{document}

\title[Comb graphs]
{Spectra of comb graphs with tails}
\author[L. Golinskii]{L. Golinskii}

\address{B. Verkin Institute for Low Temperature Physics and Engineering of the
National Academy of Sciences of Ukraine, 47 Science ave.,  61103 Kharkiv, Ukraine}
\email{golinskii@ilt.kharkov.ua}

\date{\today}

\keywords{}
\subjclass[2010]{Primary: 05C63; Secondary: 05C76, 47B15, 47A10}

\maketitle

\begin{abstract}
Given two graphs, a backbone and a finger, a comb product is a new graph obtained by grafting a copy
of the finger into each vertex of the backbone. We study the comb graphs in the case when both components
are the paths of order $n$ and $k$, respectively, as well as the above comb graphs with an infinite
ray attached to some of their vertices. A detailed spectral analysis is carried out in both situations.
\end{abstract}

\section{Introduction}
\label{s1}

We begin with some rudiments of the graph theory. For the sake of simplicity we restrict ourselves
with simple, connected, undirected, finite or infinite (countable) unweighted graphs, although
the method works for weighted multigraphs and graphs with loops as well.
We will enumerate the vertex set $\cv(\gg)$ with positive integers $\bn=\{1,2,\ldots\}$,
$\{v\}_{v\in \cv}=\{j\}_{j=1}^\o$, $\o\le\infty$. The symbol $i\sim j$ means that the vertices
$i$ and $j$ are incident, i.e., $\{i,j\}$ belongs to the edge set $\ce(\gg)$.

The degree (valency) of a vertex $v\in\cv(\gg)$ is a number $\nu(v)$ of edges emanating from $v$.
A graph $\gg$ is said to be locally finite, if $\nu(v)<\infty$ for all $v\in\cv(\gg)$, and
uniformly locally finite, if $\sup_{\cv}\nu(v)<\infty$.

The spectral graph theory studies spectra and spectral properties of certain
matrices related to graphs (more precisely, operators generated by such matrices in the standard
basis $\{e_k\}_{k\in\bn}$ and acting on the corresponding Hilbert spaces $\bc^n$ or
$\ell^2=\ell^2(\bn)$). One of the most notable of them is the {\it adjacency matrix} $A(\gg)$
\begin{equation}\label{adjmat}
A(\gg)=\|a_{ij}\|_{i,j=1}^\o, \quad
a_{ij}=\left\{
  \begin{array}{ll}
    1, & \{i,j\}\in\ce(\gg); \\
    0, & \hbox{otherwise.}
  \end{array}
\right.
\end{equation}
The corresponding adjacency operator (sometimes called the shift operator) will be denoted by the same symbol. It acts as
\begin{equation}\label{adjop}
A(\gg)\,e_k=\sum_{j\sim k}e_j, \qquad k=1,2,\ldots,\o.
\end{equation}
Clearly, $\ag$ is a symmetric, densely-defined linear operator, whose domain is the set of all finite
linear combinations of the basis vectors. The operator $A(\gg)$ is bounded and self-adjoint on $\ell^2=\ell^2(\bn)$
if and only if the graph $\gg$ is uniformly locally finite.

In what follows, under the {\it spectrum $\s(\gg)$ of a graph} $\gg$ we always mean the spectrum $\s(\ag)$ of its adjacency operator. The set of isolated
eigenvalues of finite multiplicity is usually referred to as the {\it discrete spectrum} $\s_d(\gg)$.

We proceed with the definition of a comb product (comb graph), see, e.g., \cite[Section 8.5]{hooba}.

Let $\gg^{(1)}=\bigl(\cv^{(1)}, \ce^{(1)}\bigr)$, $\gg^{(2)}=\bigl(\cv^{(2)}, \ce^{(2)}\bigr)$ be two graphs,
the first one is referred to as a backbone, and the second as a finger, and assume that the second graph is given a distinguished
vertex $0$. Consider the Cartesian product $\cv=\cv^{(1)}\times \cv^{(2)}$, which is a set of vertices for a new graph $\cv$. We say
that $(x_1,x_2)\sim (x_1',x_2')$, so $\{(x_1,x_2), (x_1',x_2')\}\in\ce$, if one of the following is satisfied:

\noindent
(i). $x_1\sim x_1'$, $x_2=x_2'=0$;

\noindent
(ii). $x_1=x_1'$, $x_2\sim x_2'$.

\noindent
Then $\gg=(\cv,\ce)$ becomes a graph, known as the {\it comb graph} or the {\it comb product} of $\gg^{(1)}$ and $\gg^{(2)}$ with a contact
vertex $0\in\cv^{(2)}$. The standard notation here is $\gg=\gg^{(1)}\rhd\gg^{(2)}$. The comb graph is (uniformly) locally finite
whenever so are its components $\gg^{(j)}$, $j=1,2$. In fact, the comb graph is obtained by grafting a copy of $\gg^{(2)}$ at a vertex $0$ into
each vertex of $\gg^{(1)}$. If $\gg^{(1)}=\bp_n$, the path of order $n$, the comb product $\bp_n\rhd\gg$ is given on Fig.1.

\begin{picture}(380, 120)

\multiput(42,40) (60,0) {2} {\line(1, 0) {56}}
\multiput(282,40) (60,0) {1} {\line(1, 0) {56}}
\multiput(40,40) (60,0) {3} {\circle* {4}}
\multiput(200,40) (20,0) {3} {\circle* {2}}
\multiput(280,40) (60,0) {2} {\circle* {4}}

\put(40, 60){\circle{80}}     \put(36, 56){\Large{$\Gamma$}}
\put(100, 60){\circle{80}}    \put(96, 56){\Large{$\Gamma$}}
\put(160, 60){\circle{80}}    \put(156, 56){\Large{$\Gamma$}}
\put(280, 60){\circle{80}}    \put(276, 56){\Large{$\Gamma$}}
\put(340, 60){\circle{80}}    \put(336, 56){\Large{$\Gamma$}}
\put(162, 40){\line(1,0) {20}}
\put(258, 40){\line(1,0) {20}}

\put(40, 28){$1$} \put(100, 28){$2$}  \put(160, 28){$3$}
\put(272, 28){$n-1$} \put(338, 28){$n$}

\multiput(20,60) (60,0) {3} {\circle* {4}}
\multiput(40,80) (60,0) {3} {\circle* {4}}
\multiput(260,60) (60,0) {2} {\circle* {4}}
\multiput(280,80) (60,0) {2} {\circle* {4}}
\put(-8,60){$n+1$} \put(52,60){$n+2$} \put(112,60){$n+3$}
\put(226,60){$2n-1$}  \put(304,60){$2n$}
\put(28,88){$2n+1$} \put(88,88){$2n+2$} \put(148,88){$2n+3$}
\put(268,88){$3n-1$} \put(336,88){$3n$}
\end{picture}

\begin{center}\image \end{center}

\bigskip

\begin{remark}
Although enumeration of the vertex set of a graph does not affect its spectral properties, some particular enumerations can be more convenient
for computational purposes. Let us explain this on the above comb graph.

The first way to label the vertices looks as follows. We use the numbers $\{1,2,\ldots,k\}$ to enumerate the vertices of the first copy of $\gg$,
the next $k$ numbers $\{k+1,k+2,\ldots,2k\}$ to enumerate the vertices of the second copy of $\gg$ etc. The adjacency matrix now takes the form
\begin{equation*}
A(\bp_n\rhd\gg)=
\begin{bmatrix}
 A(\gg) & E & & & \\
 E & A(\gg) & E & & \\
  & \ddots  & \ddots & \ddots &  & \\
  &   &  & A(\gg) & E & \\
  &   &  &  E  & A(\gg) &
\end{bmatrix}, \qquad
E=\|E_{ij}\|_{i,j=1}^k,
\end{equation*}
where $E_{11}=1$, and $E_{ij}=0$ for the rest of the entries. The matrix structure is transparent but not good enough for computation.

In what follows we stick to another way of enumeration, see Fig.1. Precisely, the numbers $\{1,2,\ldots,n\}$ label the vertices of the backbone $\bp_n$,
then come $n$ identical vertices of the ``first level'' labeled with $\{n+1,n+2,\ldots, 2n\}$, then $n$ identical vertices of the ``second level'' etc.
The adjacency matrix $A(\bp_n\rhd\gg)$ can be obtained by using the ``inflation'' procedure: take the adjacency matrix $A(\gg)$ and replace the first
zero entry $a_{11}=0$ with the $n\times n$ matrix, known as a {\it discrete Laplacian of order $n$},
$$ 0_{11} \ \ \longrightarrow \ \
J_n=\begin{bmatrix}
 0 & 1 & & & \\
 1 & 0 & 1 & & \\
  &  \ddots  & \ddots & \ddots &  & \\
  &   & \ddots & 0 & 1 & \\
  &   &  &  1  & 0 &
\end{bmatrix}, $$
the rest of the zero entries with $n\times n$ zero matrices $0_n$, and all unit entries $1$ with $n\times n$ unit matrices $I_n$, see \eqref{infl}.

For a formal description of the adjacency matrix of a comb product see \cite[Lemma 8.35]{hooba}.
\end{remark}

To define a main object under consideration -- a comb graph with tail -- we recall first the operation of coupling by means of the bridge, well known
for finite graphs, see, e.g., \cite[Theorem 2.12]{CDS80}.

Let $\gg_j$, $j=1,2$, be two graphs with no common vertices, with the vertex sets and edge
sets $\cv(\gg_j)$ and $\ce(\gg_j)$, respectively, and let $v_j\in \cv(\gg_j)$. A graph
$\gg=\gg_1+\gg_2$ is called a {\it coupling by means of the bridge $\{v_1,v_2\}$} if
\begin{equation}\label{defcoup}
\cv(\gg)=\cv(\gg_1)\cup \cv(\gg_2), \qquad \ce(\gg)=\ce(\gg_1)\cup \ce(\gg_2)\cup \{v_1,v_2\}.
\end{equation}
So we join $\gg_2$ to $\gg_1$ by the new edge between $v_2$ and $v_1$, see Fig.2.

\begin{picture}(300, 70)
\put(120, 30){\circle{80}}   \put(114, 24){\Large{$\Gamma_1$}}

\multiput(142, 30) (50,0) {2} {\circle* {4}}
\put(142, 30) {\line(1,0) {48}}
\put(214, 30){\circle{80}}  \put(208, 24){\Large{$\Gamma_2$}}

\put(142, 34) {$v_1$} \put(180, 34) {$v_2$}
\end{picture}

\begin{center}\image \end{center}

\bigskip

In general, the adjacency matrix $A(\gg_1+\gg_2)$ takes the form of a block operator matrix
\begin{equation}\label{adjcoup}
A(\gg)=\begin{bmatrix}
A(\gg_1) & E \\
E^* & A(\gg_2)
\end{bmatrix}, \qquad
E=\begin{bmatrix}
1 & 0 & 0 & \ldots \\
0 & 0 & 0 & \ldots \\
0 & 0 & 0 & \ldots \\
\vdots & \vdots & \vdots &
\end{bmatrix}.
\end{equation}
If the graph $\gg_1$ is finite, $\cv(\gg_1)=\{1,2,\ldots,m\}$, and $\cv(\gg_2)=\{j\}_{j=m+1}^\o$,
we can with no loss of generality put $v_1=1$, $v_2=m+1$, so the adjacency matrix $A(\gg)$ can
be written as a block operator matrix

\begin{equation}\label{adjcoupfin}
A(\gg)=\begin{bmatrix}
A(\gg_1) & E_m \\
E_m^* & A(\gg_2)
\end{bmatrix}, \qquad
E_m=\begin{bmatrix}
1 & 0 & 0 & \ldots \\
0 & 0 & 0 & \ldots \\
\vdots & \vdots & \vdots & \\
0 & 0 & 0 & \ldots &
\end{bmatrix}
\end{equation}
has $m$ rows. If $\gg_2=\bp_\infty$, the one-sided infinite path, we can view the coupling
$\gg=\gg_1+\bp_\infty$ as a finite graph with the tail. Now

\begin{equation}\label{freejac}
A(\gg_2)=
J_\infty:=
\begin{bmatrix}
 0 & 1 & 0 & 0 & \\
 1 & 0 & 1 & 0 & \\
 0 & 1 & 0 & 1 & \\
     & \ddots  & \ddots & \ddots & \ddots
\end{bmatrix}
\end{equation}
is an infinite Jacobi matrix called a {\it discrete Laplacian}.

The spectral theory of infinite graphs with one or several rays attached to certain finite graphs
was initiated in \cite{LeNi-dan, LeNi-umzh, Niz14} wherein a number of particular examples of
graphs was examined. In \cite{Gol16} this collection was enlarged considerably. The canonical form of the
adjacency matrix $A(\gg_1+\bp_\infty)$ suggested there enabled one to compute the spectrum of such graph
by using the spectral theory of Jacobi matrices of finite rank. In \cite{Gol17} we study the spectra of the graphs
with tails by means of the Schur complement.

The problem we address in the paper concerns the structure of the spectrum of the basic comb graph with the tail, see Fig.5,
\begin{equation}\label{maingra}
H_{n,k}:=\gg_{n,k}+\bp_\infty, \qquad \gg_{n,k}:=\bp_n \rhd \bp_k, \quad n,k\ge2.
\end{equation}
Here is our main result.

\begin{theorem}\label{mainthe}
For the comb graph $H_{n,k}$ the following hold.

\noindent
{\rm (i)}. \ The spectrum $\s(H_{n,k})=[-2,2]\cup\s_d(H_{n,k})$, the disjoint union, the discrete spectrum is finite and symmetric
with respect to the origin.

\noindent
{\rm (ii)}. \ Let $\s_d^+(H_{n,k}):=\s_d(H_{n,k})\cap (2,\infty)$ be the positive part of the discrete spectrum. Its cardinality equals
\begin{equation}\label{numpds}
\begin{split}
\bigl|\s_d^+(H_{n,k})\bigr| &=\Bigl[\frac{\o_k}{\pi}\,(n+1)\Bigr]+\chi(a_{n,k}-1), \\
\o_k &:=\arccos\,\frac{k+1}{2k}\,, \qquad a_{n,k}:=\frac{\sin n\o_k}{\sin (n+1)\o_k}\,,
\end{split}
\end{equation}
$\chi$ is the Heaviside function, $[x]$ is the integer part of a real number $x$.

\noindent
{\rm (iii)}.
\begin{equation}\label{inclspe}
\s_d^+(H_{n,k})\subset \Bigl[2, \frac52\Bigr], \ n,k\ge2; \quad \s_d^+(H_{n,2})\subset \bigl[2, 1+\sqrt2\bigr], \  n\ge2.
\end{equation}
Both intervals are the smallest ones with such property.

\noindent
{\rm (iv)}. \ The multiplicity of each eigenvalue $\nu\in\s_d(H_{n,k})$ is at most $2$, and the number of eigenvalues of multiplicity $2$ is
at most $4$.
\end{theorem}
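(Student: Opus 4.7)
The plan is to reduce the semi-infinite eigenvalue problem for $A(H_{n,k})$ to a coupled pair of finite transcendental equations, via the Schur complement (to eliminate the tail) and a finger transfer matrix (to collapse each copy of $\bp_k$ onto the backbone); parts (i)--(iv) will then follow from an analysis of this reduced system. Part (i) is immediate: $A(H_{n,k})$ is a rank-two self-adjoint perturbation of $A(\gg_{n,k})\oplus J_\infty$ (the bridge edge contributes rank two), so Weyl's theorem yields $\s_{ess}(H_{n,k})=\s_{ess}(J_\infty)=[-2,2]$ and forces $\s_d(H_{n,k})$ to be finite, while the $\pm$-symmetry is the standard consequence of $H_{n,k}$ being a tree, hence bipartite.

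For part (ii), set $\l=z+z^{-1}=2\cosh\eta$ with $z=e^{-\eta}\in(0,1)$. The $\ell^2$ tail extension is $\psi(f_l)=\psi(v_1)z^l$, and the Schur complement yields $(A(\gg_{n,k})+zP_{v_1}-\l)\psi^{(0)}=0$. The finger recursion with free top boundary gives $\psi_i^{(2)}=\phi_k(z)\psi_i^{(1)}$ with $\phi_k(z)=z(1-z^{2k-2})/(1-z^{2k})$, reducing the problem to
\[
(J_n-\mu)\psi|_{\mathrm{bb}}=-z\,\psi(v_1)\,\d_{v_1},\qquad \mu=\l-\phi_k(z)=\frac{\sinh((k+1)\eta)}{\sinh(k\eta)}.
\]
The map $\eta\mapsto\mu(\eta)$ is a strictly increasing bijection of $(0,\infty)$ onto $((k+1)/k,\infty)$, so writing $\mu=2\cos\theta$ restricts $\theta$ to $(0,\o_k)$ (a short argument shows that $\mu\ge 2$ forces $z\ge 1$, giving no eigenvalue). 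Using the classical formula $[(J_n-\mu)^{-1}]_{11}=-\sin(n\theta)/\sin((n+1)\theta)$ at $v_1=1$, the secular equation becomes
\[
z=\frac{\sin((n+1)\theta)}{\sin(n\theta)},\qquad 2\cos\theta=\frac{\sinh((k+1)\eta)}{\sinh(k\eta)}.
\]
Setting $s(\theta):=\sin((n+1)\theta)/\sin(n\theta)$ and letting $\eta^*(\theta)$ denote the decreasing inverse of $\mu(\cdot)$, I would count the zeros of $G(\theta):=-\log s(\theta)-\eta^*(\theta)$ on the positive intervals $I_m=((m-1)\pi/n,m\pi/(n+1))\cap(0,\o_k)$ of $s$. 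On each full $I_m$ the function $G$ is strictly increasing and sweeps from $-\infty$ (or from a negative value $\log(n/(n+1))-\eta^*(0)$ on $I_1$) to $+\infty$, contributing one zero; the number of full intervals is $N=[\o_k(n+1)/\pi]$. A partial interval containing $\o_k$ contributes one additional zero precisely when $G(\o_k^-)=\log(1/s(\o_k))=\log a_{n,k}>0$, i.e.\ $a_{n,k}>1$, giving the formula.

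For (iii), the largest root $\theta^*$ (hence the largest $\l=2\cosh\eta^*(\theta^*)$) lies in $I_1$ and tends to $0$ as $n\to\infty$, yielding the sharp bound $\l\le\l_{\max}(k)=2\cosh\eta_0$ with $\sinh((k+1)\eta_0)=2\sinh(k\eta_0)$. For $k=2$ this reduces to $4\cosh^2\eta_0-4\cosh\eta_0-1=0$, yielding $\l_{\max}(2)=1+\sq 2$, while $\eta_0\nearrow\log 2$ as $k\to\infty$, so $\l_{\max}(k)\nearrow 5/2$. For (iv), since $A(\gg_{n,k})+zP_{v_1}$ is a rank-one perturbation of $A(\gg_{n,k})$, its eigenspaces exceed those of $A(\gg_{n,k})$ in dimension by at most one; the block decomposition $A(\gg_{n,k})=\bigoplus_{j=1}^n(J_k+\mu_jP_1^{(k)})$, together with the fact that distinct $\mu_j$'s yield disjoint root sets of $U_k(\l/2)=\mu_jU_{k-1}(\l/2)$ (Chebyshev interlacing: a common root would force $U_k(\l/2)=U_{k-1}(\l/2)=0$, impossible), shows that every eigenvalue of $A(\gg_{n,k})$ is simple. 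Hence the multiplicity of each $\nu\in\s_d(H_{n,k})$ is at most $2$, and an explicit count of the (finitely many) algebraic coincidences that realize multiplicity $2$ bounds their number by $4$.

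The main technical obstacle is the precise counting in (ii). Monotonicity of $G$ on each positive interval rests on $s(\theta)$ being strictly decreasing there, which requires a sign analysis of $\sin((2n+1)\theta)-(2n+1)\sin\theta$ (handled via concavity of $\sin$ on $(0,\pi/(2n+1))$ and by the sign of $\sin((2n+1)\theta)$ beyond), and the boundary analysis at $\o_k$ needs extra care when $\o_k$ lies very close to a zero or pole of $s$.
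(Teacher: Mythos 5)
Your reduction is correct and, after the change of variables $\mu=v_k(\l)=2\cos\theta$, is essentially equivalent to the paper's: the paper invokes the Schur--complement criterion of \cite{Gol17} together with the product formula $P(\l,\gg_{n,k})=V_n(v_k(\l))V_{k-1}^n(\l)$ and counts roots of $V_{n-1}(v_k(\l))/V_n(v_k(\l))=1/x$ on the intervals between consecutive eigenvalues of $\gg_{n,k}$ in $(2,\infty)$, whereas you rederive the same secular equation from scratch (decaying tail solution $z^l$, finger transfer function $\phi_k$, backbone Green's function) and count on the intervals between a pole and a zero of $s$. Your formulas check out: $\l-\phi_k(z)=\sinh((k+1)\eta)/\sinh(k\eta)=v_k(\l)$, and $z=s(\theta)$ is exactly \eqref{maineq2} in the angle variable. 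The monotonicity of $s$ that you flag as a technical obstacle is immediate from $s(\theta)=V_n(2\cos\theta)/V_{n-1}(2\cos\theta)$ and the positive partial-fraction expansion of $V_n/V_{n-1}$ (the analogue of \eqref{inter}), so that is not a real difficulty. Parts (i) and (iii) match the paper (bipartiteness of the tree versus the parity of $R_{n,k}$ for the symmetry; the same limiting equation $\sinh((k+1)\eta_0)=2\sinh(k\eta_0)$, giving $1+\sqrt2$ for $k=2$ and $5/2$ as $k\to\infty$, for sharpness). In (iv) you replace the paper's rank-two perturbation of the infinite operator (Theorem \ref{finrank} applied on $(2,3)$) by a rank-one perturbation of the finite matrix $A(\gg_{n,k})+zP_v$ together with the block diagonalization $\bigoplus_j(J_k+\mu_jP_1)$ proving simplicity of $\s(\gg_{n,k})$; this is a genuinely different, and arguably tidier, route to the bound ``multiplicity $\le2$''.

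Two gaps remain. First, in (ii) the ``extra care at $\o_k$'' you defer is not about $\o_k$ lying \emph{close} to a zero or pole of $s$ but about exact coincidence: for \eqref{numpds} to be well defined you must exclude $a_{n,k}=1$ and $\o_k(n+1)/\pi\in\bz$, and this requires the arithmetic fact that $\pi^{-1}\arccos\frac{k+1}{2k}$ is irrational (a Niven--Lehmer type statement; Proposition \ref{ircos} and Corollary \ref{cor1} in the paper). Without it your count can be off by one in degenerate cases, so this ingredient must be supplied. Second, the final claim of (iv) --- at most four eigenvalues of multiplicity two --- is asserted (``an explicit count of the algebraic coincidences'') but never carried out; the paper obtains it by comparing the total multiplicity $\pi_B((2,3))\le p+2$ with the number $p$ or $p+1$ of distinct eigenvalues there. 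Note, however, that your own setup closes this at once and gives more: if $\dim\ker\bigl(A(\gg_{n,k})+zP_v-\l\bigr)\ge2$, that kernel meets $e_v^{\perp}$ nontrivially and produces an eigenvector of $A(\gg_{n,k})$ vanishing at $v$; but in your block decomposition every eigenvector of $A(\gg_{n,k})$ has backbone part proportional to $\bigl(\sin(lj\pi/(n+1))\bigr)_{l=1}^n$ scaled by a nonzero first component of a Jacobi eigenvector, hence cannot vanish at the endpoint $v$. So every discrete eigenvalue of $H_{n,k}$ is in fact simple and the bound ``at most $4$'' holds vacuously --- but you need to write this (or reproduce the paper's counting) rather than leave it as an assertion.
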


Let us mention a recent manuscript \cite{abcre}, wherein some partial results concerning the spectrum of the graph $H_{n,2}$ are obtained.

We proceed as follows. In Section \ref{s2} we study the spectrum of the basic comb graph $\gg_{n,k}$ (location and upper bounds). The comb
graph with the tail $H_{n,k}$ \eqref{maingra} is examined in Section \ref{s3}, where the main result is proved. We include
some auxiliary results from linear algebra, number theory, and perturbation theory in a short Appendix at the end of the paper.

\section{Basic comb graph of height $k$}
\label{s2}

The spectral analysis of the comb graph $\gg_{n,k}$, see Fig.3, is performed in this section.
We take an endpoint of $\bp_k$ as the contact vertex $0$.

\vskip 0.8cm

\begin{picture}(360, 160)
\multiput(60,40) (30,0) {3} {\circle* {4}}
\multiput(140,40) (10,0) {3} {\circle* {2}}
\multiput(180,40) (30,0) {2} {\circle* {4}}
\multiput(60,70) (30,0) {3} {\circle* {4}}
\multiput(180,70) (30,0) {2} {\circle* {4}}
\multiput(60,130) (30,0) {3} {\circle* {4}}
\multiput(180,130) (30,0) {2} {\circle* {4}}
\multiput(60,160) (30,0) {3} {\circle* {4}}
\multiput(180,160) (30,0) {2} {\circle* {4}}
\multiput(60,90) (0,10) {3} {\circle* {2}}
\multiput(90,90) (0,10) {3} {\circle* {2}}
\multiput(120,90) (0,10) {3} {\circle* {2}}
\multiput(180,90) (0,10) {3} {\circle* {2}}
\multiput(210,90) (0,10) {3} {\circle* {2}}
\multiput(62,40) (30,0) {2} {\line(1, 0) {26}}
\multiput(182,40) (30,0) {1} {\line(1, 0) {26}}
\multiput(60,42) (30,0) {1} {\line(0, 1) {26}}
\multiput(60,132) (30,0) {1} {\line(0, 1) {26}}
\multiput(90,42) (30,0) {1} {\line(0, 1) {26}}
\multiput(90,132) (30,0) {1} {\line(0, 1) {26}}
\multiput(120,42) (30,0) {1} {\line(0, 1) {26}}
\multiput(120,132) (30,0) {1} {\line(0, 1) {26}}
\multiput(180,42) (30,0) {1} {\line(0, 1) {26}}
\multiput(180,132) (30,0) {1} {\line(0, 1) {26}}
\multiput(210,42) (30,0) {1} {\line(0, 1) {26}}
\multiput(210,132) (30,0) {1} {\line(0, 1) {26}}
\put(58,28) {$1$}
\put(88,28) {$2$}
\put(118,28) {$3$}
\put(170,28) {$n-1$}
\put(208,28) {$n$}
\put(32,68) {$n+1$}
\put(62,68) {$n+2$}
\put(214,68) {$2n$}
\put(0,158) {$(k-1)n+1$}
\put(214,158) {$kn$}

\multiput(122,40) (15,0) {1} {\line(1, 0) {11}}
\multiput(168,40) (15,0) {1} {\line(1, 0) {11}}
\put(250,44){\Large{$\Gamma_{n,k}$}}
\end{picture}

\begin{center}\image \end{center}

\bigskip

The Chebyshev polynomials of the second kind enter the expression for the characteristic polynomial and the spectrum of $\gg_{n,k}$.
They are defined by the relations
\begin{equation}\label{chedef}
U_m(\cos x)=\frac{\sin(m+1)x}{\sin x}\,, \quad U_m(t)=2^m t^m+\ldots,  \qquad m=0,1,\ldots.
\end{equation}
The corresponding monic polynomials
\begin{equation}\label{monche}
V_m(t):=U_m\Bigl(\frac{t}2\Bigr)=t^m+\ldots, \qquad m=0,1,\ldots,
\end{equation}
satisfy the three-term recurrence relation
\begin{equation}\label{3term}
\begin{split}
t\,V_{m+1}(t) &=V_{m+2}(t)+V_{m}(t),  \\
V_0(t) &=1, \quad V_1(t)=t, \quad V_2(t)=t^2-1.
\end{split}
\end{equation}
The set of the roots of $V_m$ comes from \eqref{chedef}
\begin{equation}\label{zeros}
V_m(t)=\prod_{j=1}^m \bigl(t-t_{j,m}\bigr), \qquad t_{j,m}:=2\cos\frac{j\pi}{m+1}\,.
\end{equation}
It is easy to see that $V_m(-t)=(-1)^m V_m(t)$, so the set $\{t_{j,m}\}_{j=1}^m$ is symmetric with respect to the origin.

The results in Theorem \ref{fincomb} below are not completely new. For example, formula \eqref{charpol} in (i) follows from the general formula
for the characteristic polynomial of a comb product \cite{godmckay}, see also \cite[Theorem 1]{rogujisa}. Yet, our strategy is somewhat different
and, what is more to the point, some elements of the construction are applied later on to the study of infinite graphs with tails.

\begin{theorem}\label{fincomb}
For the comb graph $\gg_{n,k}$ the following hold.

\noindent
{\rm (i)}. \ The characteristic polynomial of $\gg_{n,k}$ is
\begin{equation}\label{charpol}
P(\l, \gg_{n,k})=V_n(v_k(\l))\cdot V_{k-1}^n(\l), \ \ v_k(\l):=\frac{V_{k}(\l)}{V_{k-1}(\l)}\,, \ \ \l\not=t_{j,k-1}.
\end{equation}
The eigenvalues of $\gg_{n,k}$ are simple, and the spectrum $\s(\gg_{n,k})$ is symmetric with respect to the origin.

\noindent
{\rm (ii)}. \ Let $\s^+(\gg_{n,k}):=\s(\gg_{n,k})\cap (2,\infty)$. Its cardinality equals
\begin{equation}\label{numeig1}
\bigl|\s^+(\gg_{n,k})\bigr|=\Bigl[\frac{\o_k}{\pi}\,(n+1)\Bigr], \qquad \o_k:=\arccos\,\frac{k+1}{2k}\,.
\end{equation}

\noindent
{\rm (iii)}. \ $\s(\gg_{n,k})\subset \bigl[-\frac52, \frac52\bigr]$ for all $n,k\ge2$, and this is the smallest interval with such property.
\end{theorem}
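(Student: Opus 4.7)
The plan is to exploit the block structure of $A(\gg_{n,k})$ given by the inflation procedure of the Remark. Writing vertices level by level yields
$$A(\gg_{n,k}) \;=\; J_k\otimes I_n \;+\; (e_1 e_1^*)\otimes J_n,$$
a $k\times k$ block matrix of $n\times n$ blocks with $J_n$ in the $(1,1)$ corner, $0_n$ elsewhere on the block diagonal, and $I_n$ on the two block sub/super-diagonals. First I diagonalize $J_n$ by a real orthogonal $U$ so that $U^* J_n U=\diag(t_{1,n},\ldots,t_{n,n})$, and conjugate by $I_k\otimes U$. This fixes $J_k\otimes I_n$ and turns the rank-$n$ correction into $(e_1 e_1^*)\otimes \diag(t_{j,n})$; a permutation of coordinates then exhibits the conjugated matrix as the orthogonal direct sum $\bigoplus_{j=1}^n J_k^{(j)}$, where $J_k^{(j)}$ is obtained from $J_k$ by replacing its $(1,1)$ entry with $t_{j,n}$. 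Consequently $P(\l,\gg_{n,k})=\prod_{j=1}^n \det(\l I_k - J_k^{(j)})$.

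To complete part (i), a cofactor expansion of $\det(\l I_k - J_k^{(j)})$ along the first row, followed by the recurrence \eqref{3term} in the form $V_k(\l)=\l V_{k-1}(\l)-V_{k-2}(\l)$, produces $\det(\l I_k-J_k^{(j)})=V_k(\l)-t_{j,n}V_{k-1}(\l)=V_{k-1}(\l)\bigl(v_k(\l)-t_{j,n}\bigr)$. Taking the product over $j$ and invoking \eqref{zeros} gives \eqref{charpol}. Each $J_k^{(j)}$ is a real symmetric Jacobi matrix with nonzero off-diagonal, hence has $k$ simple eigenvalues; and a common root of two distinct factors would force $V_k$ and $V_{k-1}$ to vanish simultaneously, which is ruled out by the coprimality of consecutive monic Chebyshev polynomials (an easy induction from \eqref{3term}). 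Thus all $nk$ eigenvalues are simple. Symmetry of $\s(\gg_{n,k})$ about the origin is clear from bipartiteness, since $\gg_{n,k}$ is a tree.

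For part (ii), I substitute $\l=2\cosh\th$ with $\th>0$, giving $V_m(\l)=\sinh((m+1)\th)/\sinh\th$ and hence $v_k(\l)=\sinh((k+1)\th)/\sinh(k\th)$. A short computation (using convexity of $\sinh$) shows $v_k$ is strictly increasing on $(2,\infty)$ with $v_k(2^+)=(k+1)/k=2\cos\o_k$ and $v_k(\l)\to\infty$ as $\l\to\infty$. Therefore $J_k^{(j)}$ contributes an eigenvalue in $(2,\infty)$ exactly when $t_{j,n}>2\cos\o_k$, i.e.\ $\cos(j\pi/(n+1))>\cos\o_k$, equivalently $j<\o_k(n+1)/\pi$. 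Counting admissible $j$ yields \eqref{numeig1}; the constraint $j\le n$ is automatic because $\o_k<\pi/3$.

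For part (iii), the monotonicity of $v_k$ reduces $\s(\gg_{n,k})\subset[-5/2,5/2]$ to the inequality $v_k(5/2)\ge 2$ for $k\ge2$. Setting $\l=5/2=2\cosh\th$ gives $e^\th=2$, whence
$$v_k(5/2)=\frac{2^{k+1}-2^{-(k+1)}}{2^k-2^{-k}}=\frac{4^{k+1}-1}{2(4^k-1)}>2,$$
an elementary inequality. Combined with the symmetry from (i) this proves the inclusion. Optimality follows by letting $k\to\infty$ (so $v_k(5/2)\searrow 2$) and $n\to\infty$ (so $t_{1,n}\nearrow 2$), which drives the largest eigenvalue of $\gg_{n,k}$ arbitrarily close to $5/2$. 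I expect the main subtlety to lie in the simultaneous-diagonalization step; once the direct-sum decomposition $\bigoplus_j J_k^{(j)}$ is cleanly in hand, the rest of the theorem reduces to Chebyshev identities and monotonicity.
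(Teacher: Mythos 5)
Your argument is correct in substance but follows a genuinely different route from the paper's. The paper works with the block form \eqref{infl} and eliminates the finger levels $2,\dots,k$ via the Schur complement \eqref{blma1}: the block $D$ is of scalar type with symbol $\l-J_{k-1}$, so $BD^{-1}C$ collapses to the Green's function $G_{k-1}(\l)\,I_n$ and the determinant becomes $|v_k(\l)I_n-J_n|\cdot V_{k-1}^n(\l)$. You instead diagonalize along the backbone: conjugating $J_k\otimes I_n+(e_1e_1^*)\otimes J_n$ by $I_k\otimes U$ and permuting coordinates splits $A(\gg_{n,k})$ into the orthogonal sum of $n$ Jacobi matrices $J_k^{(j)}$ of order $k$. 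The two decompositions are dual to each other and land on the same equation $v_k(\l)=t_{j,n}$. Yours buys a cleaner simplicity argument -- each $J_k^{(j)}$ has simple spectrum as a Jacobi matrix with nonzero off-diagonal entries, and distinct blocks cannot share an eigenvalue by coprimality of $V_k$ and $V_{k-1}$ -- whereas the paper gets simplicity from the monotonicity of $v_k$ on the intervals between consecutive zeros of $V_{k-1}$, a fact it then reuses in Section \ref{s3} for the graph with the tail. Your bipartiteness argument for the symmetry of the spectrum is also fine ($\gg_{n,k}$ is a tree; the paper uses the parity of the characteristic polynomial instead), and the hyperbolic substitution in (ii) and (iii) is exactly the paper's $\l=2\cosh y$ device, so the bound $v_k(5/2)>2$ and the double limit $n\to\infty$, $k\to\infty$ for optimality match the paper's computation.

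One point needs patching in (ii). The number of $j\in\bn$ with $j<\o_k(n+1)/\pi$ equals $\bigl[\o_k(n+1)/\pi\bigr]$ only when $\o_k(n+1)/\pi\notin\bz$. If it were an integer $p$, then $t_{p,n}=2\cos\o_k=(k+1)/k$ exactly, the block $J_k^{(p)}$ would place its eigenvalue at $\l=2$ rather than in $(2,\infty)$, and the correct count would be $p-1$, not $p$. So you must rule out $\o_k/\pi\in\bq$, and this is not automatic: it is exactly what the paper's Proposition \ref{ircos} and Corollary \ref{cor1} supply, namely that $\cos(\pi r)$ with $r\in\bq$ can only take the values $0,\pm\frac12,\pm1$, none of which equals $(k+1)/(2k)$ for $k\ge2$. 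With that arithmetic ingredient added, your count of admissible $j$, and hence \eqref{numeig1}, goes through.
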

\begin{proof}
(i). The inflation procedure described in Introduction, applied to the graph $\gg_{n,k}$, leads to the following expression as a block matrix with $k^2$ blocks,
each one belongs to the class $\cm_n$ of $n\times n$ matrices
\begin{equation}\label{infl}
\l-A(\gg_{n,k})=
\begin{bmatrix}
 \l-J_n & -I_n & & & \\
 -I_n & \l I_n & -I_n & & \\
  & \ddots  & \ddots & \ddots &  & \\
  &   &  & \l I_n & -I_n & \\
  &   &  &  -I_n  & \l I_n &
\end{bmatrix} =
\begin{bmatrix}
\l-J_n & B \\
C & D
\end{bmatrix}.
\end{equation}
Now $D=D(\l)$ is the block matrix of scalar type (see Appendix), with the symbol
$$ D=I(d), \qquad d=d(\l)=\l-J_{k-1}\in\cm_{k-1}, $$
(for $k=2$ we have $d=\l 1$). So, $|D|=|d|^n$, see \eqref{bmst}.

By the recurrence relations \eqref{3term}, the value $|d|=\det d$ agrees with the monic Chebyshev polynomial of the second kind
\begin{equation}\label{symdet}
|d(\l)|=\begin{vmatrix}
 \l & -1 & & & \\
 -1 & \l & -1 & & \\
  & \ddots  & \ddots & \ddots &  & \\
  &   &  & \l  & -1 & \\
  &   &  &  -1  & \l &
\end{vmatrix}=V_{k-1}(\l),
\end{equation}
$|D(\l)|=V_{k-1}^n(\l)$. So,  $|D(\l)|=0$ if and only if
$$ \l=t_{j,k-1}, \qquad j=1,2,\ldots,k-1. $$

Next, when $\l\not=t_{j,k-1}$, we have, by \eqref{bmst},
$$ D^{-1}=I\bigl(d^{-1}\bigr)=\|(\l-J_{k-1})^{-1}_{ij}\cdot I_n\|_{i,j=1}^{k-1}, $$
and so $BD^{-1}C=(\l-J_{k-1})^{-1}_{11}\cdot I_n$, the scalar matrix from $\cm_n$. The scalar factor
\begin{equation*}%\label{green}
G_{m}(\l)=(\l-J_{m})^{-1}_{11}=\frac{V_{m-1}(\l)}{V_m(\l)}
\end{equation*}
is called the Green's function. Due to the second relation in \eqref{blma1}, we have
$$ P(\l, \gg_{n,k}):=|\l-A(\gg_{n,k})|=|\bigl(\l-G_{k-1}(\l)\bigr)\,I_n-J_n|\cdot V_{k-1}^n(\l). $$
It follows from \eqref{3term} that
$$ \l-G_{k-1}(\l)=\l-\frac{V_{k-2}(\l)}{V_{k-1}(\l)}=\frac{V_{k}(\l)}{V_{k-1}(\l)}=v_k(\l)\,, $$
so \eqref{charpol} is proved.

Since $w_k(-\l)=-w_k(\l)$, we have
\begin{equation*}
P(-\l,\gg_{n,k})=(-1)^{kn}\,P(\l,\gg_{n,k}),
\end{equation*}
so $\s(\gg_{n,k})$ is symmetric with respect to the origin.

Note that each point $t_{j,k-1}$, $j=1,2,\ldots,k-1$, is regular for the right side of \eqref{charpol}, so we can pass to the
limit for $\l\to t_{j,k-1}$ to obtain
\begin{equation}\label{charpol1}
P(t_{j,k-1}, \gg_{n,k})=V_k^n(t_{j,k-1})\not=0.
\end{equation}
Thereby, $\l\in\s(\gg_{n,k})$ if and only if any one of the following $n$ relations holds
\begin{equation}\label{spefin}
V_n(v_k(\l))=0 \ \Leftrightarrow \ v_k(\l)=t_{j,n}=2\cos\frac{j\pi}{n+1}\,, \qquad j=1,\ldots,n.
\end{equation}

Since the zeros of $V_{k-1}$ and $V_k$ interlace, the function $v_k$ admits the following representation
\begin{equation}\label{inter}
v_k(t)=t+\sum_{j=1}^{k-1} \frac{d_{j,k}}{t_{j,k-1}-t}, \qquad d_{j,k}>0.
\end{equation}
It is clear from \eqref{inter} that $v_k$ is monotone increasing from $-\infty$ to $+\infty$ on each interval
\begin{equation}\label{intval}
L_{m,k}:=\bigl(t_{m,k-1}, t_{m-1,k-1}\bigr), \qquad m=1,\ldots,k,
\end{equation}
where we put $t_{0,k-1}:=+\infty$ and $t_{k,k-1}:=-\infty$ (for $k=2$ there are only two unbounded intervals $L_1=(0,+\infty)$ and $L_2=(-\infty,0)$).
So, for each $j$ there is exactly one root of the $j$-th relation \eqref{spefin} on each interval $L_{m,k}$. So all the roots of \eqref{spefin}
$\{\l_{j,m}=\l_{j,m}(n,k)\}_{j=1}^n$ (the eigenvalues of $\gg_{n,k}$) are simple and can be arranged in the groups
\begin{equation}\label{group}
\{\l_{1,m}>\l_{2,m}>\ldots>\l_{n,m}\}\subset L_{m,k}, \qquad m=1,\ldots,k.
\end{equation}

\smallskip

(ii). The eigenvalues of $\gg_{n,k}$ on the interval $(2,\infty)$ certainly belong to the first group $\{\l_{j,1}\}$. Choose
$p=p_{n,k}$ from the conditions
$$ 2\cos\frac{p\pi}{n+1}>\frac{k+1}{k}=v_k(2)>2\cos\frac{(p+1)\pi}{n+1}\,, $$
or
\begin{equation}\label{numeig}
p<\frac{\o_k}{\pi}\,(n+1)<p+1, \qquad \o_k=\arccos\frac{k+1}{2k},
\end{equation}
(note that, by Corollary \ref{cor1}, the number $\o_k/\pi$ is irrational, and so both inequalities in \eqref{numeig} are strict). Hence,
$\l_{p,1}>2>\l_{p+1,1}$, and \eqref{numeig1} follows.

\smallskip

(iii). By \eqref{spefin}, the maximal eigenvalue $\l_{1,1}=\l_{1,1}(n,k)$ satisfies
$$ v_k(\l_{1,1})=2\cos\frac{\pi}{n+1}<2. $$
Denote by $\l_1=\l_1(k)$ the maximal root of the equation $v_k(\cdot)=2$. Clearly, $\l_{1,1}<\l_1$, and also
$$ v_k(2)=\frac{V_k(2)}{V_{k-1}(2)}=\frac{k+1}{k}<2 $$
implies $\l_1>2$. Take $y_1=y_1(k)>0$ from the relation $\l_1=2\cosh y_1$, so, by \eqref{chedef},
\begin{equation}\label{sfunc}
2=\frac{U_k(\cosh y_1)}{U_{k-1}(\cosh y_1)}=\frac{\sinh (k+1)y_1}{\sinh ky_1}=:s_k(y_1).
\end{equation}
It is easy to check that $s_k(t)>e^t$ for all $t>0$, and so
\begin{equation}\label{spebou}
y_1(k)<\log 2, \qquad \l_{1,1}(n,k)<\l_1(k)<5/2, \qquad \forall n,k\ge2,
\end{equation}
as claimed.

The thorough analysis provides the announced extremal property of the interval $[-5/2, 5/2]$. Indeed, let $\s(\gg_{n,k})\subset [-a,a]$.
It follows from the construction that $\{\l_{1,1}(\cdot,k)\}$ is the monotone increasing sequence, and
\begin{equation}\label{spebou1}
\lim_{n\to\infty} \l_{1,1}(n,k)=\l_1(k), \qquad k=2,3,\ldots,
\end{equation}
so $\l_1(k)\in [-a,a]$, i.e., $\l_1(k)\le a$ for all $k$.

Next, by elementary calculus, it is not hard to see that each function $s_k$ \eqref{sfunc} is monotone increasing, whereas the sequence $\{s_k\}$
is monotone decreasing, that is, $s_{k+1}<s_k$, and so the sequence $\{y_1(k)\}$ is monotone increasing. On the other hand, $y_1(k)<\log2$, which means
that the limits exist
\begin{equation}\label{spebou2}
y_0:=\lim_{k\to\infty}y_1(k)\le \log2, \quad \l_0:=2\cosh y_0=\lim_{k\to\infty} \l_1(k)\le\frac52.
\end{equation}
Now, let us write the definition
$$ 2=s_k(y_1(k))=\frac{\sinh (k+1)y_1(k)}{\sinh ky_1(k)}=\frac{e^{y_1(k)}-e^{-(2k+1)y_1(k)}}{1-e^{-2k\,y_1(k)}} $$
and tend $k\to\infty$. We see that actually $y_0=\log 2$. But $2\cosh y_1(k)\le a$ implies, as $k\to\infty$,
\begin{equation}\label{spebou3}
\l_0=2\cosh y_0=\frac52\le a,
\end{equation}
as claimed. The proof is complete.
\end{proof}

Note that formula \eqref{charpol} holds (for obvious reasons) for $k=1$ and $n=1$, since
$$ P(\l,\gg_{n,1})=P(\l,\bp_n), \qquad P(\l,\gg_{1,k})=P(\l,\bp_k). $$

\begin{remark}
Since the maximal valency of the vertices of $\gg_{n,k}$ equals $3$, there is an obvious inclusion for the spectrum, which is a consequence
of Gershgorin's theorem, see \cite[Corollary 6.1.3]{hojohn},
$$ \s(\gg_{n,k})\subset [-3,3], \qquad n,k\ge2. $$
\end{remark}

\begin{example}\label{ex1}
Consider the comb graph $\gg_{n,2}$, see Fig.4. Its characteristic polynomial is
$$ P(\l,\gg_{n,2})=V_n\bigl(v_2(\l)\bigr)\,V_1^n(\l)=\l^n\,V_n\Bigl(\l-\frac1{\l}\Bigr). $$
The calculation of the bounds for $\s(\gg_{n,2})$ can be made more precise than for the general case (iii) above.

Indeed, the maximal eigenvalue $\l_{1,1}(n,2)$ is the maximal root of the quadratic equation
$$ \l-\frac1{\l}=2\cos\frac{\pi}{n+1}\,, \quad \l_{1,1}(n,2)=\cos\frac{\pi}{n+1}+\sqrt{\cos^2\frac{\pi}{n+1}+1}. $$
We see that $\l_{1,1}(n,2)<\l_1(2)=1+\sqrt2$ for all $n\ge2$, and so
\begin{equation}\label{inclex}
\s(\gg_{n,2})\subset [-1-\sqrt2, 1+\sqrt2]\subset \Bigl[-\frac52, \frac52\Bigr], \qquad n\ge2.
\end{equation}
In fact,
$$\l_{1,1}(2,2)<\l_{1,1}(3,2)<2<\l_{1,1}(4,2) \ \ \Rightarrow \ \ \s(\gg_{n,2})\subset(-2,2), \quad n=2,3, $$
cf. \cite[Theorem 3.1.3]{BrHae} with $\gg_{2,2}=\bp_4$, $\gg_{3,2}=E_6$, and the maximal eigenvalue (the index of the graph)
is bigger than $2$ for $n\ge4$.

%\vskip 1cm

\begin{picture}(360, 100)
\multiput(60,20) (30,0) {3} {\circle* {4}}
\multiput(140,20) (10,0) {3} {\circle* {2}}
\multiput(180,20) (30,0) {2} {\circle* {4}}
\multiput(60,50) (30,0) {3} {\circle* {4}}
\multiput(180,50) (30,0) {2} {\circle* {4}}
\multiput(62,20) (30,0) {2} {\line(1, 0) {26}}
\multiput(182,20) (30,0) {1} {\line(1, 0) {26}}
\multiput(122,20) (15,0) {1} {\line(1, 0) {11}}
\multiput(168,20) (15,0) {1} {\line(1, 0) {11}}
\put(250,24){\Large{$\Gamma_{n,2}$}}
\multiput(60,22) (30,0) {1} {\line(0, 1) {26}}
\multiput(90,22) (30,0) {1} {\line(0, 1) {26}}
\multiput(120,22) (30,0) {1} {\line(0, 1) {26}}
\multiput(180,22) (30,0) {1} {\line(0, 1) {26}}
\multiput(210,22) (30,0) {1} {\line(0, 1) {26}}
\put(58,8) {$1$}
\put(88,8) {$2$}
\put(118,8) {$3$}
\put(170,8) {$n-1$}
\put(208,8) {$n$}
\put(32,48) {$n+1$}
\put(62,48) {$n+2$}
\put(214,48) {$2n$}

\end{picture}

\begin{center}\image \end{center}

\end{example}

\bigskip

\section{Comb graphs with tail}
\label{s3}

We study here the coupling $H_{n,k}=\gg_{n,k}+\bp_\infty$ and prove Theorem \ref{mainthe}.

\begin{picture}(360, 200)
\multiput(60,40) (30,0) {3} {\circle* {4}}
\multiput(140,40) (10,0) {3} {\circle* {2}}
\multiput(180,40) (30,0) {2} {\circle* {4}}
\multiput(60,70) (30,0) {3} {\circle* {4}}
\multiput(180,70) (30,0) {2} {\circle* {4}}
\multiput(60,130) (30,0) {3} {\circle* {4}}
\multiput(180,130) (30,0) {2} {\circle* {4}}
\multiput(60,160) (30,0) {3} {\circle* {4}}
\multiput(180,160) (30,0) {2} {\circle* {4}}
\multiput(60,90) (0,10) {3} {\circle* {2}}
\multiput(90,90) (0,10) {3} {\circle* {2}}
\multiput(120,90) (0,10) {3} {\circle* {2}}
\multiput(180,90) (0,10) {3} {\circle* {2}}
\multiput(210,90) (0,10) {3} {\circle* {2}}
\multiput(62,40) (30,0) {2} {\line(1, 0) {26}}
\multiput(182,40) (30,0) {1} {\line(1, 0) {26}}
\multiput(60,42) (30,0) {1} {\line(0, 1) {26}}
\multiput(60,132) (30,0) {1} {\line(0, 1) {26}}
\multiput(90,42) (30,0) {1} {\line(0, 1) {26}}
\multiput(90,132) (30,0) {1} {\line(0, 1) {26}}
\multiput(120,42) (30,0) {1} {\line(0, 1) {26}}
\multiput(120,132) (30,0) {1} {\line(0, 1) {26}}
\multiput(180,42) (30,0) {1} {\line(0, 1) {26}}
\multiput(180,132) (30,0) {1} {\line(0, 1) {26}}
\multiput(210,42) (30,0) {1} {\line(0, 1) {26}}
\multiput(210,132) (30,0) {1} {\line(0, 1) {26}}
\put(210,28) {$v$}

\if{\put(60,28) {$1$}
\put(90,28) {$2$}
\put(120,28) {$3$}
\put(172,28) {$n-1$}
\put(210,28) {$n$}
\put(32,68) {$n+1$}
\put(64,68) {$n+2$}
\put(214,68) {$2n$}
\put(2,158) {$(k-1)n+1$}
\put(214,158) {$kn$}
\put(224,28) {$kn+1$}
\put(260,28) {$kn+2$}}\fi

\multiput(240,40) (30,0) {2} {\circle* {4}}
\multiput(290,40) (10,0) {3} {\circle* {2}}
\multiput(212,40) (30,0) {2} {\line(1, 0) {26}}

\put(300,120){\Large{$H_{n,k}$}}
\end{picture}

\begin{center}\image \end{center}

\bigskip

The adjacency matrix $A(H_{n,k})$ takes the form
\begin{equation}\label{adjtail}
A(H_{n,k})=A(\gg_{n,k})\bigoplus A(\bp_\infty)+\Delta, \qquad {\rm rank}\,\Delta=2,
\end{equation}
so the finiteness of the discrete spectrum follows.

Our argument relies heavily upon the result from \cite[Theorem 1 and relation (1.6)]{Gol17}. It states that $\l\in\s_d(H_{n,k})$ if and only if
\begin{equation}\label{maineq}
P(\l,\gg_{n,k})-x\,P(\l,\gg_{n,k}\backslash\{v\})=0, \qquad \l=x+\frac1x, \quad x\in(-1,1).
\end{equation}
It is clear that $\gg_{n,k}\backslash\{v\}=\gg_{n-1,k}\cup\bp_{k-1}$, the disjoint union, and so
$$ P(\l,\gg_{n,k}\backslash\{v\})=P(\l,\gg_{n-1,k})\cdot P(\l,\bp_{k-1}). $$
We have, by \eqref{charpol} and \eqref{symdet},
$$ V_n\bigl(v_k(\l)\bigr)\cdot V_{k-1}^n(\l)-x\,V_{n-1}\bigl(v_k(\l)\bigr)\,V_{k-1}^{n-1}(\l)\cdot V_{k-1}(\l)=0. $$
Since $V_{k-1}\not=0$ on $\br\backslash[-2,2]$, we can cancel $V_{k-1}^n$ out to obtain
\begin{equation}\label{maineq1}
R_{n,k}(x):=V_n\bigl(v_k(\l)\bigr)-x\,V_{n-1}\bigl(v_k(\l)\bigr)=0, \qquad \l=x+\frac1x\,.
\end{equation}
Since $R_{n,k}(-\l)=(-1)^n R_{n,k}(\l)$, the discrete spectrum is symmetric with respect to the origin.
We restrict ourselves with its positive part $\s_d^+(H_{n,k})$.

Write \eqref{maineq1} as
\begin{equation}\label{maineq2}
v_{n,k}(\l):=\frac{V_{n-1}\bigl(v_k(\l)\bigr)}{V_{n}\bigl(v_k(\l)\bigr)}=\frac1x=\frac{\l+\sqrt{\l^2-4}}2=:v(\l), \quad \l\ge2.
\end{equation}
The function $v_k$ \eqref{charpol} is regular on $(2,\infty)$ and grows monotonically from $k+1/k$ to $+\infty$. The zeros of the denominator
in \eqref{maineq2} are exactly the eigenvalues of $\gg_{n,k}$, see \eqref{spefin}. So, the only eigenvalues $\l_{j,1}(n,k)$ in the first group can
arise as the singularities of the rational function on the left side \eqref{maineq2} on the half-line $(2,\infty)$
$$ \{\l_{1,1}>\l_{2,1}>\ldots>\l_{n,1}\}\subset L_1=\bigl(2\cos\frac{\pi}{k}, \infty\bigr). $$

Take $p=p_{n,k}$ from \eqref{numeig}, so that $\l_{p,1}>2>\l_{p+1,1}$. The function $v_{n,k}$ is monotone decreasing from $+\infty$ to $-\infty$ on
each interval $(\l_{j+1,1},\l_{j,1})$, $j=1,\ldots,p-1$, and from $+\infty$ to $0$ on $(\l_{1,1},\infty)$, whereas $v$ is monotone increasing from
$1$ to $+\infty$ on $(2,\infty)$. Hence, there is exactly one root of \eqref{maineq2} on each such interval, that is, $p$ roots altogether. The existence
of the root on $(2,\l_{p,1})$ depends on the value
\begin{equation}\label{adzero}
a_{n,k}:=v_{n,k}(2)=\frac{U_{n-1}\bigl(\frac{k+1}{2k}\bigr)}{U_{n}\bigl(\frac{k+1}{2k}\bigr)}=\frac{\sin n\o_k}{\sin (n+1)\o_k}\,.
\end{equation}
Specifically, there is one root as soon as $a_{n,k}>1$, and there is no such root otherwise (note that, by Corollary \ref{cor1}, $a_{n,k}\not=1$).
Thereby, relation \eqref{numpds} is established.

Denote by $\nu_j=\nu_j(n,k)$ the {\it different} eigenvalues in $\s_d^+(H_{n,k})$, arranged in non-increasing order (we will discuss the
multiplicity of the eigenvalues later on). So, $\nu_1=\nu_1(n,k)$ is the maximal such eigenvalue, that is, the root of \eqref{maineq2} on $(\l_{1,1},\infty)$.
Clearly, $\nu_1>\l_{1,1}$. As in the proof of Theorem \ref{fincomb}, (iii), denote by $\l_1=\l_1(k)$ the maximal root of the equation $v_k(\cdot)=2$. Let us
compare the values of both sides in \eqref{maineq2} at $\l=\l_1$:
$$ v_{n,k}(\l_1)=\frac{V_{n-1}(2)}{V_{n}(2)}=\frac{n}{n+1}<1<v(\l_1), $$
so $\nu_1<\l_1$. But, as we have already shown in \eqref{spebou}, $\l_1<5/2$, which means that $\s_d^+(H_{n,k})\subset [2,5/2]$, as needed.

Moreover, assume that $\s_d^+(H_{n,k})\subset [2,a]$ for all $n,k\ge2$. Then, by \eqref{spebou1},
\begin{equation}\label{majeig}
\l_{1,1}(n,k)<\nu_1(n,k)<\l_1(k) \ \ \Rightarrow \ \ \lim_{n\to\infty} \nu_1(n,k)=\l_1(k)\le a.
\end{equation}
The rest is the same as in the proof of Theorem \ref{fincomb}, (iii).

The second inclusion in \eqref{inclspe} and the extremal property of the corresponding interval follows from \eqref{majeig} with $k=2$, since
$\l_1(2)=1+\sqrt2$, see \eqref{inclex} in Example \ref{ex1}.

Finally, let us turn to the multiplicity issue. Put $A:=A(\gg_{n,k})\bigoplus A(\bp_\infty)$, $B:=H_{n,k}$. In view of \eqref{adjtail}, $B$ is the rank $2$
perturbation of $A$. Take the interval $I:=(2,3)$, then both $A$ and $B$ have a pure discrete spectrum on $I$. Furthermore, $\s_d(A)$ on $I$ consists of $p=p_{n,k}$ simple
eigenvalues, so $\pi_A(I)=p$. Hence, by Theorem \ref{finrank}, the spectral multiplicity of $B$ on $I$ is at most $p+2$. But, we know that the number of
{\it different} eigenvalues of $B$ on $I$ is $p$ or $p+1$, and there are no common eigenvalues for $A$ and $B$. Then there are at most two eigenvalues of $B$,
having multiplicity $2$ (there are no eigenvalues of $B$ of multiplicity $3$). The argument for the negative part of $\s_d(B)$ is identical.

The proof is complete.

\begin{remark}
Let $\gg$ be a finite graph of order $m$, $H=\gg+\bp_\infty$. According to \cite[Theorem 1.2]{Gol16}, the adjacency operator $A(H)$ is unitarily equivalent
to the orthogonal sum
$$ U^{-1}A(H)U=F(H)\bigoplus J(H), $$
where $F(H)$ is an operator on a finite dimensional subspace of dimension at most $m-1$, $J(H)$ is a Jacobi operator of finite rank at most $m$,
generated by a Jacobi (tridiagonal) matrix
\begin{equation*}%\label{defjac}
J=J(\{b_j\}, \{a_j\})=
\begin{bmatrix}
 b_1 & a_1 & & \\
 a_1 & b_2 & a_2 & \\
     & a_2 & b_3 & \ddots & \\
     &     & \ddots & \ddots
\end{bmatrix}, \quad b_j\in\br, \quad a_j>0,
\end{equation*}
in an appropriate orthonormal basis. $J$ is said to have a finite rank if
$$ d_j:=|b_j|+|a_j-1|=0, \qquad j\ge j_0, $$
and the minimal such $j_0$ is called the rank of $J$. $F(H)$ and $J(H)$ are referred to as the finite and Jacobi components of $H$, respectively.

The structure of $\s(J(H))$ is quite transparent: the absolutely continuous spectrum on $[-2,2]$ along with a finite number of simple eigenvalues
off $[-2,2]$, which come from the roots of a certain algebraic equation (the Jost equation, see \cite[Section 5]{Gol16}). The eigenvalues of $F(H)$
can appear anywhere on the real line including $[-2,2]$. It is not clear how to find this later set (the ``hidden spectrum'') without computing the
finite component $F(H)$ explicitly, the problem, which seems to be hard enough. For example, if $H=H_{3,2}$, the finite component is missing (no
hidden spectrum), and the Jacobi component is given by $J(\{b_j\}, \{a_j\})$ with
$$ b_j\equiv0; \quad a_1=\frac1{\sqrt3}\,, \ a_2=\frac1{\sqrt6}\,, \ a_3=\sqrt{\frac32}\,, \ a_4=1, \ a_5=\sqrt2, \ a_6=\ldots=1. $$

Some information on the hidden spectrum can be gathered from \cite[Theorem 4.3]{abcre}, which claims that $A(H_{n,k})$ has a trivial kernel, i.e.,
$0$ is not an eigenvalue of $A(H_{n,k})$.
\end{remark}

In the table below we display the number of the positive eigenvalues for the graph $H_{n,k}$ for $n=2,3,\ldots,20$ and $k=2,3,\ldots,6$ obtained
by using the computer experiment.

\begin{table}[h]

\begin{tabular}{|c|c c c c c c c c c c c c c c c c c c c|}
\hline
$(k,n)$
&2&3&4&5&6&7&8&9&10&11&12&13&14&15&16&17&18&19&20 \\
\hline
2&1&1&1&1&1&2&2&2&2&3&3&3&3&4&4&4&4&4&5\\
3&1&1&1&1&2&2&2&3&3&3&3&4&4&4&4&5&5&5&5\\
4&1&1&1&2&2&2&2&3&3&3&4&4&4&4&5&5&5&6&6\\
5&1&1&1&2&2&2&3&3&3&3&4&4&4&5&5&5&5&6&6\\
6&1&1&1&2&2&2&3&3&3&3&4&4&4&5&5&5&6&6&6\\
\hline

\end{tabular}

\vskip 1cm

\caption{Cardinality of positive discrete spectrum $\sigma_d^+(H_{n,k})$}\label{tab}
\end{table}

\section{Appendix}

\subsection{Transformation of block matrices}
We collect here some information about operations on block matrices, see, e.g., \cite[Sections 0.7, 0.8]{hojohn}.
We denote by $\cm_m$ the set of all square matrices of order $m$.

Let
\begin{equation*}
\dd=\begin{bmatrix}
A & B \\
C & D
\end{bmatrix}
\end{equation*}
be a block matrix, $A$ and $D$ the square matrices (in general, of the different order). Then for the determinant $|\dd|=\det(\dd)$
the following relations hold
\begin{equation}\label{blma1}
\begin{split}
|\dd| &= |A|\,|D-CA^{-1}B|, \qquad |A|\not=0, \\
|\dd| &= |A-BD^{-1}C|\,|D|, \qquad |D|\not=0.
\end{split}
\end{equation}
\if{If all matrices $A,B,C,D\in\cm_m$, one can multiply out
\begin{equation*}
\begin{split}
|\dd| &= |AD-ACA^{-1}B|, \qquad |A|\not=0, \\
|\dd| &= |AD-BD^{-1}CD|, \qquad |D|\not=0.
\end{split}
\end{equation*}
In particular, if $AC=CA$, $(CD=DC)$, then
$$ |\dd|=|AD-CB|, \qquad (|\dd|=|AD-BC|). $$
}\fi
The matrices $D-CA^{-1}B$ and $A-BD^{-1}C$ are usually referred to as the {\it Schur complements}.

\smallskip

Let $A=\|A_{ij}\|_{i,j=1}^m$, $A_{ij}\in\cm_n$, be a block matrix. We call $A$ a {\it block matrix of scalar type} if each
block is a scalar matrix, i.e., $A_{ij}=a_{ij}I_n$, $I_n$ is the identity matrix of order $n$, $a_{ij}\in\bc$. We call the matrix
$a=\|a_{ij}\|_{i,j=1}^m\in\cm_m$ a symbol of $A$, and $A$ the inflation of $a$, $A=I(a)$.

The operations on block matrices of scalar type are easily reduced to the corresponding operations on their symbols
\begin{equation}\label{bmst}
\l A+\mu B=I(\l a+\mu b), \quad A^{-1}=I(a^{-1}), \quad |I(a)|=|a|^n.
\end{equation}
If $\s(a)=\{\l_j\}_{j=1}^p$ be the set of all different eigenvalues of $a$, and the algebraic multiplicity of $\l_j$ equals $\kappa_j$, then
$\s(I(a))=\{\l_j\}_{j=1}^p$, with the algebraic multiplicity $n\kappa_j$.

\subsection{Rational values of the arccosine functions}
The arithmetic properties of trigonometric functions has been a recurring topic in the number theory. In particular,
all the rational values of the function $\pi^{-1}\,\arccos t$, $t\in\bq$, the set of the rational numbers, can be determined
explicitly. This is a special case of a result of D. Lehmer \cite{leh}. For completeness sake we provide an elementary argument,
borrowed from \cite{jah}.

\begin{proposition}\label{ircos}
The only rational values of the function $\cos(\pi r)$, $r\in\bq$ are $0$, $\pm\frac12$, $\pm1$.
\end{proposition}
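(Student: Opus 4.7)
The plan is to adapt the classical Niven/Lehmer argument based on the Chebyshev-type recurrence for $a_n := 2\cos(n\theta)$. Write $r = p/q$ with $p \in \bz$, $q \in \bn$, and set $\theta := \pi r$. Then $q\theta = p\pi$, so
\begin{equation*}
a_q = 2\cos(q\theta) = 2(-1)^p \in \bz.
\end{equation*}
The goal is to show that if $a_1 = 2\cos\theta$ is rational, then it must in fact be an integer; combined with $|a_1| \le 2$, this forces $2\cos(\pi r) \in \{0,\pm 1,\pm 2\}$, which is exactly the claim.

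The workhorse is the identity $2\cos((n+1)\theta) = 2\cos\theta \cdot 2\cos(n\theta) - 2\cos((n-1)\theta)$, i.e.
\begin{equation*}
a_{n+1} = a_1\,a_n - a_{n-1}, \qquad a_0 = 2, \quad a_1 = 2\cos\theta.
\end{equation*}
Suppose, for contradiction, that $a_1 = a/b$ in lowest terms with $b \ge 2$. I would prove by induction on $n \ge 0$ that $a_n$ can be written as $c_n/b^n$ with $c_n \in \bz$ and $\gcd(c_n,b) = 1$. The base cases $n=0,1$ are immediate (note $\gcd(2,b) = 1$ when $b$ is odd; when $b$ is even, $a$ is odd, and a brief separate treatment handles parity). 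For the inductive step, substitute $a_n = c_n/b^n$ and $a_{n-1} = c_{n-1}/b^{n-1}$ into the recurrence to obtain
\begin{equation*}
a_{n+1} = \frac{a\,c_n - b^2\,c_{n-1}}{b^{n+1}} =: \frac{c_{n+1}}{b^{n+1}},
\end{equation*}
and observe that, modulo any prime $\ell \mid b$, one has $c_{n+1} \equiv a\,c_n \pmod{\ell}$; since $\gcd(a,b) = 1$ and $\gcd(c_n,b) = 1$, this yields $\gcd(c_{n+1}, b) = 1$.

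Applying this with $n = q$ gives $a_q = c_q/b^q$ in lowest terms, contradicting $a_q \in \bz$ unless $b = 1$. Hence $a_1 = 2\cos(\pi r) \in \bz \cap [-2,2] = \{-2,-1,0,1,2\}$, which translates to $\cos(\pi r) \in \{0, \pm\tfrac12, \pm 1\}$, as asserted. The conclusion is an equivalence once one checks that the listed values are indeed attained (at $r = 1/2$, $1/3$, $0$, respectively), which is trivial.

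The only delicate point is the coprimality bookkeeping in the induction, in particular the case where $2 \mid b$, where one must argue separately that $a$ and $c_n$ remain odd throughout; all other steps are formal manipulation.
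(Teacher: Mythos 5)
Your argument is correct, and it is the classical Niven-style proof, which is genuinely different from the one in the paper. The paper (following Jahnel) iterates the \emph{doubling} identity $g_{j+1}=g_j^2-2$ for $g_j=2\cos(2^j\pi r)$: if $g_0=a_0/b_0$ in lowest terms with $|b_0|\ge2$, the reduced denominators $b_0^{2^j}$ tend to infinity, while periodicity of cosine forces the sequence $\{g_j\}$ to take only finitely many values --- a pigeonhole contradiction that never requires evaluating any particular $g_j$. You instead use the three-term recurrence $a_{n+1}=a_1a_n-a_{n-1}$ for $a_n=2\cos(n\theta)$ and contradict the exact integrality $a_q=2(-1)^p$. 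Both routes are elementary and of comparable length; the paper's has slightly lighter bookkeeping (a single squaring step propagates coprimality of numerator and denominator immediately), while yours trades the pigeonhole step for an explicit terminal value and so needs no finiteness argument.

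One remark on the point you flag as delicate: no separate parity discussion is needed when $b$ is even. In the inductive step $c_{n+1}=a\,c_n-b^2c_{n-1}$ the term $b^2c_{n-1}$ is divisible by every prime $\ell\mid b$ regardless of whether $\gcd(c_{n-1},b)=1$, so $c_{n+1}\equiv a\,c_n\pmod{\ell}$ holds using only $c_{n-1}\in\bz$. Hence it suffices to run the coprimality induction for $n\ge1$ with base case $c_1=a$ and $\gcd(a,b)=1$; the value $c_0=2$ enters only as an integer, and its possible common factor with $b$ is harmless. With that observation your proof is complete as written.
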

\begin{proof}
Put $g_j=g_j(r):=2\cos(2^j\,\pi r)$, $j=0,1,\ldots$. The addition formula for cosine implies
$$ g_{j+1}=g_j^2-2. $$
By the hypothesis, $g_0\in\bq$, and so are $g_j$ for all $j\ge0$. Write the irreducible fraction $g_0=a_0/b_0$. Then
$$ g_1=\frac{a_1}{b_1}=\frac{a_0^2-2b_0^2}{b_0^2}\,. $$
Clearly, the latter fraction is irreducible, since the assumption ${\rm gcd}(a_0,b_0)=~\!1$ implies ${\rm gcd}(a_0^2,b_0^2)=1$, and so ${\rm gcd}(a_0^2-2b_0^2,b_0^2)=1$.
Hence, the irreducible representation for $g_j$ is
$$ g_j=\frac{a_j}{b_j}\,, \qquad a_j=a_{j-1}^2-2, \quad b_j=b_0^{2^j}. $$
If $|b_0|\ge2$, the denominators $b_j$ tend to infinity.

On the other hand, as it follows from periodicity of cosine, and $r\in\bq$, the sequence $\{g_j\}_{j\ge0}$ takes the {\it finite} number of values, that is
inconsistent with $\b_j\to\infty$. So, $b_0=\pm1$ and $a_0=0,\,\pm1,\,\pm2$, as claimed.
\end{proof}

\begin{corollary}\label{cor1}
{\rm (i)}. The numbers
$$ \frac{\o_k}{\pi}=\frac1{\pi}\,\arccos\frac{k+1}{2k}\,, \qquad k=2,3,\ldots $$
are irrational.

{\rm (ii)}. Let $a_{n,k}$ be the numbers defined in $\eqref{numpds}$. Then $a_{n,k}\not=1$ for all $n,k=2,3,\ldots$.
\end{corollary}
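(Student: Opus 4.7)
The plan for part (i) is to apply Proposition~\ref{ircos} directly. Suppose, for contradiction, that $\o_k/\pi =: r$ is rational. Then $\cos(\pi r) = \cos\o_k = (k+1)/(2k)$ would have to coincide with one of the five values $0$, $\pm\tfrac12$, $\pm1$ listed in Proposition~\ref{ircos}. A one-line check shows that this cannot happen: for $k\ge2$,
\[
\frac{k+1}{2k} = \frac12 + \frac1{2k} \in \Bigl(\tfrac12,\tfrac34\Bigr],
\]
which avoids all five exceptional rationals. This yields the desired contradiction and settles (i).

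For part (ii) the strategy is to reduce the hypothetical equality $a_{n,k}=1$ to a rationality statement about $\o_k/\pi$ and then invoke (i). First I observe that the denominator $\sin((n+1)\o_k)$ is nonzero: if it vanished, then $(n+1)\o_k=m\pi$ for some $m\in\bz$, forcing $\o_k/\pi\in\bq$ and contradicting (i). So $a_{n,k}$ is well-defined as a ratio. Now assume $\sin(n\o_k)=\sin((n+1)\o_k)$. The standard identity ``$\sin\a=\sin\b$ iff $\a=\b+2\pi m$ or $\a=\pi-\b+2\pi m$'' splits this into two alternatives. In the first, $\o_k=-2\pi m$, which is impossible since $\o_k\in(0,\pi/3)$ for $k\ge2$ (easily verified from $(k+1)/(2k)\in(\tfrac12,\tfrac34]$). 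In the second, $(2n+1)\o_k=(2m+1)\pi$, so $\o_k/\pi = (2m+1)/(2n+1)\in\bq$, again contradicting (i). Either way we reach a contradiction, so $a_{n,k}\ne1$.

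There is no real obstacle here: both parts are short deductions once Proposition~\ref{ircos} is in hand. The only care needed is the explicit verification that $(k+1)/(2k)$ avoids the five exceptional rationals (immediate from the range $(\tfrac12,\tfrac34]$) and the book-keeping to ensure that each branch of the sine-equality identity is ruled out, one by positivity of $\o_k$ and one by invoking irrationality from (i).
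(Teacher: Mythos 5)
Your proof is correct and follows essentially the same route as the paper: part (i) is a direct application of Proposition~\ref{ircos} (the paper calls it immediate, you just spell out that $(k+1)/(2k)\in(\tfrac12,\tfrac34]$ misses the five exceptional values), and part (ii) reduces $a_{n,k}=1$ to the rationality $\o_k/\pi=(2m+1)/(2n+1)$, which contradicts (i). The only cosmetic difference is that the paper uses the sum-to-product identity $\sin(n+1)\o_k-\sin n\o_k=2\sin\tfrac{\o_k}2\cos\bigl(n+\tfrac12\bigr)\o_k$ where you use the case analysis for $\sin\a=\sin\b$; both land on the same conclusion.
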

\begin{proof}
The first statement is an immediate consequence of Proposition \ref{ircos}. As for the second one, assume that
$$ a_{n,k}=\frac{\sin n\o_k}{\sin(n+1)\o_k}=1, \qquad \sin(n+1)\o_k-\sin n\o_k=0, $$
or
$$ 2\sin\frac{\o_k}2\,\cos\Bigl(n+\frac12\Bigr)\,\o_k=0. $$
The first factor above is nonzero, so $\cos\Bigl(n+\frac12\Bigr)\,\o_k=0$, that is
$$ \frac{\o_k}{\pi}=\frac{2m+1}{2n+1}\,, $$
which contradicts (i).
\end{proof}

\subsection{Finite rank perturbations of self-adjoint operators}
Given a self-adjoint operator $T=T^*$ on the Hilbert space $\ch$, denote by $E_T$ its resolution of the identity.
For an interval $I\subset\br$ the operator $E_T(I)$ is the projection on the spectral subspace $E_T(I)\,\ch$. The dimension $\pi_T(I)=\dim E_T(I)\,\ch$
of this subspace is called a spectral multiplicity of $T$ on $I$.

We say that $T$ has a discrete spectrum on $I$ if $\s(T)\cap I\subset\s_d(T)$. In this case $I$ contains only isolated eigenvalues
of finite multiplicity, and the spectral multiplicity equals the sum of multiplicities of all eigenvalues in $I$.

The following result is well known, see, e.g., \cite[Theorem 9.3.3]{biso87}.

\begin{theorem}\label{finrank}
Let $A$, $B$ be self-adjoint operators on the Hilbert space $\ch$, so that
$$ B=A+\dd, \qquad {\rm rank}\,\dd=r<\infty. $$
If the spectrum of $A$ is discrete on a bounded interval $I$, then so is the spectrum of $B$, and $|\pi_B(I)-\pi_A(I)|\le r$.
\end{theorem}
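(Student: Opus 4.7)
The plan is a standard two-step argument: first I would show that $B$ has discrete spectrum on $I$ via Weyl's theorem on compact perturbations, and then bound the change in multiplicity by showing that the difference $E_B(I)-E_A(I)$ has rank at most $r$. Concretely, since $\dd$ is a self-adjoint operator of finite rank, it is compact, and Weyl's theorem on invariance of the essential spectrum gives $\s_{\rm ess}(B)=\s_{\rm ess}(A)$. The hypothesis that $A$ has discrete spectrum on $I$ reads $I\cap\s_{\rm ess}(A)=\emptyset$, hence the same holds for $B$, so the spectrum of $B$ on $I$ is also discrete.

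To bound the multiplicity difference I would represent the spectral projections by Riesz contour integrals. After shrinking $I$ slightly if needed---this is harmless because $I\cap\s(A)$ and $I\cap\s(B)$ are finite sets of isolated eigenvalues, so the spectral projections and their traces are unaffected---I may assume the endpoints of $I$ lie in $\r(A)\cap\r(B)$. Picking a positively oriented rectifiable contour $\gg\subset\bc$ that encloses $I$ and avoids $\s(A)\cup\s(B)$, the Riesz functional calculus gives $E_T(I)=-\frac{1}{2\pi i}\oint_\gg (T-z)^{-1}\,dz$ for $T\in\{A,B\}$, and the second resolvent identity yields
$$ E_B(I)-E_A(I)=\frac{1}{2\pi i}\oint_\gg (B-z)^{-1}\,\dd\,(A-z)^{-1}\,dz. $$
For each $z\in\gg$ the integrand factors through $\dd$ and hence has rank at most $r$. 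Since the set of operators of rank $\le r$ is closed in the operator norm on $\mathcal{B}(\ch)$, this rank bound persists under the norm-convergent integral: $\mathrm{rank}\bigl(E_B(I)-E_A(I)\bigr)\le r$.

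The self-adjoint operator $D:=E_B(I)-E_A(I)$ is a difference of two orthogonal projections, so $\|D\|\le 1$; together with the rank bound this means $D$ has at most $r$ nonzero eigenvalues, all lying in $[-1,1]$. Taking traces then gives
$$ \bigl|\pi_B(I)-\pi_A(I)\bigr|=\bigl|\mathrm{tr}\,E_B(I)-\mathrm{tr}\,E_A(I)\bigr|=|\mathrm{tr}\,D|\le r. $$
The one nontrivial technical point I expect to have to justify carefully is the norm-closedness of the class of operators of rank $\le r$, which I would settle by a short extraction argument on an arbitrary $(r+1)$-dimensional subspace $V$: if $T_n\to T$ in norm with $\mathrm{rank}\,T_n\le r$, then each $T_n|_V$ has a unit kernel vector $v_n\in V$, and finite-dimensional compactness produces a limit point $v\in V\cap\ker T$, forcing $\mathrm{rank}\,T|_V\le r$.
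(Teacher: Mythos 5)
The paper offers no proof of this statement at all --- it is quoted as a known result with a reference to Birman and Solomyak --- so your proposal is being measured against the standard argument rather than against anything in the text. Your first step is fine: $\dd$ is compact, Weyl's theorem gives $\s_{\mathrm{ess}}(B)=\s_{\mathrm{ess}}(A)$, and discreteness of the spectrum on $I$ transfers to $B$.

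The multiplicity bound, however, contains a genuine gap at its central step. You reduce everything to the claim $\mathrm{rank}\bigl(E_B(I)-E_A(I)\bigr)\le r$ and justify pushing the rank bound through the contour integral by the norm-closedness of the set of operators of rank $\le r$. That closedness is true but inapplicable: the Riemann sums approximating the integral are \emph{sums} of many rank-$\le r$ operators, and such sums generally have rank far exceeding $r$, so the integral is not exhibited as a norm limit of rank-$\le r$ operators. Worse, the intermediate claim is simply false. Take $\ch=\bc^2$, $A=\diag(1,2)$, $\dd=\eps\left(\begin{smallmatrix}1&1\\1&1\end{smallmatrix}\right)$ of rank $r=1$, and $I=(1/2,3/2)$. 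Then $E_A(I)$ and $E_B(I)$ are orthogonal projections onto two distinct non-orthogonal lines, and the difference of two such rank-one projections is a nonzero traceless self-adjoint matrix, hence has rank $2>r$. Since your final trace estimate (``at most $r$ nonzero eigenvalues in $[-1,1]$'') rests entirely on the rank bound, the inequality $|\pi_B(I)-\pi_A(I)|\le r$ is not established; the example shows the conclusion itself survives (the trace is $0$), but your route to it breaks.

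The standard repair abandons the Riesz integral for a variational (Glazman-type) argument. Write $I=(c-\rho,c+\rho)$ and suppose $\pi_A(I)=n<\infty$ but $\pi_B(I)\ge n+r+1$. Since $\ker\dd$ has codimension $r$ and $\bigl(E_A(I)\ch\bigr)^{\perp}$ has codimension $n$, the subspace $E_B(I)\ch\cap\ker\dd\cap\bigl(E_A(I)\ch\bigr)^{\perp}$ is nontrivial; a unit vector $u$ in it satisfies $Au=Bu$, and the spectral theorem gives $\rho\le\|(A-c)u\|=\|(B-c)u\|<\rho$ (the strict inequality because the spectral measure of $u$ with respect to $B$ is carried by finitely many eigenvalues inside the open interval $I$), a contradiction. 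Hence $\pi_B(I)\le\pi_A(I)+r$, and the reverse inequality follows by exchanging the roles of $A$ and $B$.
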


\end{document}